\newtheorem{theorem}{Theorem}[section]
\newtheorem{proposition}[theorem]{Proposition}
\newtheorem{example}[theorem]{Example}
\newtheorem{remark}[theorem]{Remark}
\newenvironment{proof}{\smallskip\par{\sc Proof.}\enspace}%
 {{\unskip\nobreak\hfil\penalty50\hskip2em
          \hbox{}\nobreak\hfil{\rule[-1pt]{5pt}{10pt}}
          \parfillskip=0pt\finalhyphendemerits=0
          \par\medskip}} 
\def\section{\@startsection {section}{1}{\z@}{3.25ex plus 1ex minus
 .2ex}{1.5ex plus .2ex}{\large\bf}}
\def\subsection{\@startsection{subsection}{2}{\z@}{3.25ex plus 1ex minus
 .2ex}{1.5ex plus .2ex}{\normalsize\bf}}
\title{Absolute continuity and Fokker-Planck equation for the law of Wong-Zakai approximations of It\^o's stochastic differential equations}
\author{Alberto Lanconelli\footnote{Dipartimento di Scienze Statistiche "Paolo Fortunati", Alma Mater Studiorum Università di Bologna, Via Belle Arti 41 - 40126 Bologna - Italia. E-mail: \emph{alberto.lanconelli2@unibo.it}}}
\date{\empty}
\begin{document}

\maketitle

\numberwithin{equation}{section}

\bigskip

\begin{abstract}
We investigate the regularity of the law of Wong-Zakai-type approximations for It\^o stochastic differential equations. These approximations solve random differential equations where the diffusion coefficient is Wick-multiplied by the smoothed white noise. Using a criteria based on the Malliavin calculus we establish absolute continuity and a Fokker-Planck-type equation solved in the distributional sense by the density. The parabolic smoothing effect typical of the solutions of It\^o equations is lacking in this approximated framework; therefore, in order to prove absolute continuity, the initial condition of the random differential equation needs to possess a density itself.    
\end{abstract}

Key words and phrases:  stochastic differential equations, Wong-Zakai approximation, Malliavin calculus \\

AMS 2000 classification: 60H10; 60H07; 60H30

\allowdisplaybreaks

\section{Introduction and statement of the main result}

The celebrated Wong-Zakai theorem \cite{WZ},\cite{WZ2}, extended to the multidimensional case by Stroock and Varadhan \cite{Stroock Varadhan}, provides a crucial insight in the theory of stochastic differential equations. It asserts that, given a suitable smooth approximation $\{B_t^{\varepsilon}\}_{t\in [0,T]}$ of the Brownian motion $\{B_t\}_{t\in [0,T]}$, the solution $\{X_t^{\varepsilon}\}_{t\in [0,T]}$ of the random ordinary differential equation
\begin{eqnarray}\label{stra approx intro}
\dot{X}_t^{\varepsilon}=b(t,X_t^{\varepsilon})+\sigma(t,X_t^{\varepsilon})\cdot\dot{B}_t^{\varepsilon},
\end{eqnarray}
converges, as $\varepsilon$ goes to zero, to the solution of the Stratonovich stochastic differential equation (SDE, for short)
\begin{eqnarray*}\label{stra intro}
dX_t=b(t,X_t)dt+\sigma(t,X_t)\circ dB_t
\end{eqnarray*}
instead to the more popular It\^o's interpretation of the corresponding stochastic equation, 
\begin{eqnarray*}\label{ito intro}
dX_t=b(t,X_t)dt+\sigma(t,X_t)dB_t.
\end{eqnarray*}
The understanding behind this phenomenon is not the way the Brownian motion $\{B_t\}_{t\in [0,T]}$ is approximated but rather the way we multiply the diffusion coefficient $\sigma(t,X_t^{\varepsilon})$ with the smoothed white noise $\dot{B}_t^{\varepsilon}$ in (\ref{stra approx intro}). The following example clarifies this point.  
\begin{example}
Consider the random ordinary differential equation
\begin{eqnarray*}
\dot{X}^{\varepsilon}_t=X^{\varepsilon}_t\cdot\dot{B}^{\varepsilon}_t,\quad X^{\varepsilon}_0=x
\end{eqnarray*}
which corresponds to (\ref{stra approx intro}) for $b(t,x)=0$ and $\sigma(t,x)=x$. Its solution
\begin{eqnarray*}
X^{\varepsilon}_t=x\exp\{B^{\varepsilon}_t\},\quad t\in [0,T]
\end{eqnarray*}
converges to 
\begin{eqnarray}\label{1}
X_t:=x\exp\{B_t\},\quad t\in [0,T]
\end{eqnarray}
whenever for each $t\in [0,T]$ the random variable $B_t^{\varepsilon}$ converges to $B_t$ in probability as $\varepsilon$ tends to zero. A direct verification shows that (\ref{1}) is the unique solution of  
\begin{eqnarray*}
dX_t=X_t/2dt+X_tdB_t,\quad X_0=x
\end{eqnarray*}
which is equivalent to 
\begin{eqnarray*}
dX_t=X_t\circ dB_t,\quad X_0=x.
\end{eqnarray*}
\end{example}
In fact, it is well known (see for instance Karatzas and Shreve \cite{KS}) that the Stratonovich SDE
\begin{eqnarray*}
dX_t=b(t,X_t)dt+\sigma(t,X_t)\circ dB_t,\quad X_0=x
\end{eqnarray*}
is equivalent to the It\^o SDE
\begin{eqnarray*}
dX_t=\Big[b(t,X_t)+\sigma(t,X_t)\partial_x\sigma(t,X_t)/2\Big]dt+\sigma(t,X_t)dB_t,\quad X_0=x.
\end{eqnarray*}
Therefore, in order to recover the It\^o interpretation of the limiting SDE one may start with the modified equation
\begin{eqnarray*}
\dot{Y}^{\varepsilon}_t=\Big[b(t,Y^{\varepsilon}_t)-\sigma(t,Y^{\varepsilon}_t)\partial_y\sigma(t,Y^{\varepsilon}_t)/2\Big]+\sigma(t,Y^{\varepsilon}_t)\cdot\dot{B}^{\varepsilon}_t,\quad Y^{\varepsilon}_0=x
\end{eqnarray*}
to obtain in the limit
\begin{eqnarray*}
dY_t=\Big[b(t,Y_t)-\sigma(t,Y_t)\partial_x\sigma(t,Y_t)/2\Big]dt+\sigma(t,Y_t)\circ dB_t,\quad Y_0=x
\end{eqnarray*}
which corresponds to
\begin{eqnarray*}
dY_t=b(t,Y_t)dt+\sigma(t,Y_t)dB_t,\quad Y_0=x.
\end{eqnarray*}
However, this procedure has some drawbacks. For instance, certain probabilistic properties of the exact solution may be lost in the approximated solution.
\begin{example}\label{example 1}
Suppose we wish to approximate
\begin{eqnarray}\label{A}
dY_t=Y_tdB_t,\quad Y_0=x
\end{eqnarray}
according to the previous procedure; then, we should consider the random ordinary differential equation
\begin{eqnarray*}
\dot{Y}^{\varepsilon}_t=-Y^{\varepsilon}_t/2+Y^{\varepsilon}_t\cdot\dot{B}^{\varepsilon}_t,\quad Y^{\varepsilon}_0=x
\end{eqnarray*}
whose solution is
\begin{eqnarray*}
Y_t^{\varepsilon}=x\exp\left\{B_t^{\varepsilon}-t/2\right\}, \quad t\in[0,T].
\end{eqnarray*}
However, on one hand we have
\begin{eqnarray*}
E[Y_t]=x\quad\mbox{ for all $t\in [0,T]$}
\end{eqnarray*}
while on the other
\begin{eqnarray}\label{B}
E[Y^{\varepsilon}_t]\neq x\quad\mbox{ unless }\quad E[(B_t^{\varepsilon})^2]=t\mbox{ for all $\varepsilon>0$}.
\end{eqnarray}
\end{example}
The problem of finding a version of equation (\ref{stra approx intro}) having in the limit the It\^o interpretation of the SDE (by-passing the Stratonovich interpretation involved in the procedure described above) was partially solved by Hu and {\O}ksendal \cite{HO}: they proved that the solution of
\begin{eqnarray}\label{ito approx intro}
\dot{Y}_t^{\varepsilon}=b(t,Y_t^{\varepsilon})+\sigma(t)Y_t^{\varepsilon}\diamond\dot{B}_t^{\varepsilon},
\end{eqnarray}
where $\diamond$ stands for the Wick product, converges as $\varepsilon$ goes to zero to the solution of the It\^o SDE
\begin{eqnarray}\label{ito oksendal intro}
dY_t=b(t,Y_t)dt+\sigma(t)Y_tdB_t.
\end{eqnarray}
Here $\sigma$ is a deterministic function; the assumption of a linear diffusion coefficient is utilized in connection with a reduction method to solve equation (\ref{ito approx intro}).  
\begin{example}
Referring to Example \ref{example 1}, we now utilize (\ref{ito approx intro}) to approximate equation (\ref{A}), i.e. 
\begin{eqnarray*}
\dot{Y}^{\varepsilon}_t=Y^{\varepsilon}_t\diamond\dot{B}^{\varepsilon}_t,\quad Y^{\varepsilon}_0=x.
\end{eqnarray*}
The solution to the previous equation can be computed using the Wick calculus as
\begin{eqnarray*}
Y^{\varepsilon}_t=x\exp^{\diamond}\{B^{\varepsilon}_t\}=x\exp\left\{ B^{\varepsilon}_t-E[|B^{\varepsilon}_t|^2]/2\right\}.
\end{eqnarray*}
Therefore,
\begin{eqnarray*}
\lim_{\varepsilon\to 0}Y^{\varepsilon}_t=x\exp\left\{B_t-t/2\right\}\quad\mbox{ as long as }\quad B_t^{\varepsilon}\to B_t \mbox{ in $L^2(\Omega)$}
\end{eqnarray*}
which is the solution of equation (\ref{A}). Moreover, in this case
\begin{eqnarray*}
E[Y_t^{\varepsilon}]=E[Y_t]=x\quad\mbox{ for any $t\in [0,T]$}
\end{eqnarray*}
independently of the particular approximation of $\{B_t\}_{t\in [0,T]}$ utilized (in contrast with (\ref{B})).
\end{example}
Roughly speaking, the reason for choosing equation (\ref{ito approx intro}) as the correct approximated version of the It\^o SDE (\ref{ito oksendal intro}) can be traced back to the identity
\begin{eqnarray}\label{core}
\int_0^T\gamma_tdB_t=\int_0^T\gamma_t\diamond\dot{B}_tdt
\end{eqnarray}
which connects the It\^o-Skorohod integral of a stochastic process $\{\gamma_t\}_{t\in [0,T]}$ (on the left hand side) with a \emph{standard} integral of the Wick product between $\gamma_t$ and the white noise $\dot{B}_t$. We refer the reader to the book Holden et al. \cite{HOUZ} for a proof of this identity. \\

The aim of the present paper is to investigate the probabilistic properties of the Wong-Zakai-type approximation (\ref{ito approx intro}), as suggested in \cite{HO}, of the It\^o SDE (\ref{ito oksendal intro}). In particular, we will focus on the absolute continuity of the law of the solution $Y_t^{\varepsilon}$ finding sufficient conditions for the existence of a density with respect to the one dimensional Lebesgue measure. We will also write a Fokker-Planck-type equation solved in the distributional sense by the density of $Y_t^{\varepsilon}$. This analogy with exact solutions of It\^o SDEs is however conditioned by a restriction on the initial data of equation (\ref{ito approx intro}) that is required to possess a density. Our approach relies on the criteria for absolute continuity based on the Malliavin calculus. We stress that Wong-Zakai approximations, contrary to many other approximation schemes like for instance the Euler discretization (see e.g. Bally and Talay \cite{BT} for an investigation on the convergence of the densities of such approximations), are defined in terms of non adapted stochastic processes.\\
There is a vast literature on Wong-Zakai approximations for Stratonovich stochastic (partial) differential equations driven by different types of noise: one may look at Brezniak and Flandoli \cite{Brezniak Flandoli}, Gy\"ongy and A. Shmatkov \cite{Gyongy Shmatkov}, Hu et al. \cite{HKX}, Hu and Nualart \cite{HN}, Konecny \cite{Konecny}, Tessitore and Zabczyk \cite{Tessitore Zabczyk} just to mention a few. We also mention the remarkable paper Hairer and Pardoux \cite{Hairer Pardoux} where a Wong-Zakai theorem for a general nonlinear It\^o-type stochastic heat equation driven by a space-time white noise is proved.\\
Wong-Zakai approximations for It\^o SDEs are quite rare in the literature. As the insight of \cite{HO} shows, one has to deal in this case with equations involving the Wick product, which corresponds to treat Skorohod SDEs (see Section \ref{proofs}). This type of equations possesses a global solution only in some particular cases making even the existence of the Wong-Zakai approximation a tough issue.  We mention the paper Da Pelo et al. \cite{DLS} dealing with Wong-Zakai approximations for It\^o-Stratonovich interpolations and Ben Ammou and Lanconelli \cite{BL} investigating the rate of convergence for Wong-Zakai approximations of It\^o SDEs in the spirit of the present paper.\\

\noindent To state our main result we need to introduce a few notation. Let $(\Omega,\mathcal{F},P)$ be the classical Wiener space over the time interval $[0,T]$, where $T$ is an arbitrary positive constant, and denote by $\{B_t\}_{t\in [0,T]}$ the coordinate process, i.e.
\begin{eqnarray*}
B_t:\Omega&\to&\mathbb{R}\\
\omega&\mapsto& B_t(\omega)=\omega(t).
\end{eqnarray*}
By construction, the process $\{B_t\}_{t\in [0,T]}$ is under the measure $P$ a one dimensional Brownian motion. Now, let $\pi$ be a finite partition of the interval $[0,T]$, that means $\pi=\{t_0,t_1,...,t_{n-1},t_n\}$ with
\begin{eqnarray*}
0=t_0<t_1<\cdot\cdot\cdot<t_{n-1}<t_n=T
\end{eqnarray*}
and consider the \emph{polygonal} approximation of the Brownian motion $\{B_t\}_{t\in [0,T]}$ relative to the partition $\pi$:
\begin{eqnarray}\label{polygonal}
B_t^{\pi}:=\left(1-\frac{t-t_k}{t_{k+1}-t_k}\right)B_{t_{k}}+\frac{t-t_k}{t_{k+1}-t_k}B_{t_{k+1}}\quad\mbox{if }t\in [t_{k},t_{k+1}[
\end{eqnarray}
and $B^\pi_T:=B_T$. It is well known that for any $\varepsilon>0$ and $p\geq 1$ there exists a positive constant $C_{p,T,\varepsilon}$ such that
\begin{eqnarray*}
\left(E\left[\sup_{t\in [0,T]}|B_t^{\pi}-B_t|^p\right]\right)^{1/p}\leq C_{p,T,\varepsilon}|\pi|^{1/2-\varepsilon}
\end{eqnarray*}
where $|\pi|:=\max_{k\in\{0,...,n-1\}}(t_{k+1}-t_k)$ stands for the mesh of the partition $\pi$. We refer the reader to Lemma 2.1 in Hu et al. \cite{HKX} and Lemma 11.8 in Hu \cite{Hu} for sharper estimates. We assume that the finite partition $\pi$ is fixed throughout the present paper. \\
Given the partition $\pi$, let $h^{\pi}$ be a function in $L^2([0,T])$ such that $h^{\pi}\neq 0$ almost everywhere and
\begin{eqnarray*}
\frac{1}{t_{k+1}-t_k}\int_{t_k}^{t_{k+1}}h^{\pi}(u)du=0\quad\mbox{ for all }k\in\{0,..., n-1\}.
\end{eqnarray*}
The crucial role of the function $h^{\pi}$ will be made clear in Proposition \ref{direction} below. The following is our main result: the symbols $\diamond$, $\mathbb{D}^{2,p}$ and $D_{h^{\pi}}$ denote the Wick product, the Sobolev-Malliavin space and the directional Malliavin derivative in the direction $h^{\pi}$, respectively. Definitions and useful properties are postponed to Section \ref{proofs}.

\begin{theorem}\label{main theorem}
Let $\{X^{\pi}_t\}_{t\in [s,T]}$ be the unique solution of the random Cauchy problem
\begin{eqnarray}
\left\{ \begin{array}{ll}\label{SDE}
\dot{X}_t^{\pi}= b(t,X_t^{\pi})+\sigma(t)X_t^{\pi}\diamond \dot{B}_t^{\pi},\quad t\in]s,T] \\
X_s^{\pi}= Y
\end{array}\right.
\end{eqnarray}
where $s\in [0,T[$ and we assume that
\begin{itemize}
\item $b:[0,T]\times\mathbb{R}\to\mathbb{R}$ is a continuous function with bounded first and second partial derivatives with respect to the second variable;
\item $\sigma:[0,T]\to\mathbb{R}$ belongs to $L^2([0,T])$;
\item $Y\in\mathbb{D}^{2,p}$ for all $p\geq 1$ and $E[|D_{h^{\pi}}Y|^{-q}]$ is finite for some $q>4$.
\end{itemize}
Then, 
\begin{enumerate}
\item for any $t\in [s,T]$ the law of $X_t^{\pi}$ is absolutely continuous with respect to the one dimensional Lebesgue measure with a bounded and continuous density;
\item the density $(t,x)\mapsto p^{\pi}(t,x)$ of the random variable $X_t^{\pi}$ solves in the sense of distributions the Fokker-Planck equation $(\partial_t+\mathcal{L}_x)^{\ast}u(t,x)=0$ where $(\partial_t+\mathcal{L}_x)^{\ast}$ stands for the formal adjoint of the operator $\partial_t+\mathcal{L}_x$ and
\begin{eqnarray}\label{PDE}
\mathcal{L}_x:=b(t,x)\partial_x+\sigma(t)xg(t,x)\partial_{xx}.
\end{eqnarray}  
for a suitable measurable function $g:[s,T]\times\mathbb{R}\to\mathbb{R}$. Moreover,
\begin{eqnarray*}
\int_{\mathbb{R}}|g(t,x)|^qp^{\pi}(t,x)dx\mbox{ is finite for all $q\geq 1$ and $t\in [s,T]$}.
\end{eqnarray*}
\end{enumerate}
\end{theorem}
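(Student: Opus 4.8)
The plan is to treat both assertions with the Malliavin-calculus criterion for absolute continuity, using the distinguished direction $h^{\pi}$ to manufacture an integration-by-parts formula, and then to read off the Fokker-Planck equation from the time derivative of $t\mapsto E[\phi(X_t^{\pi})]$ for smooth compactly supported $\phi$. The first ingredient is Malliavin regularity: under the stated smoothness of $b$, the membership $Y\in\mathbb{D}^{2,p}$, and the fact that the diffusion in (\ref{SDE}) is linear in $X$ with deterministic $\sigma$, the solution map propagates differentiability and yields $X_t^{\pi}\in\mathbb{D}^{2,p}$ for every $p\geq1$. The decisive point is the nondegeneracy of $D_{h^{\pi}}X_t^{\pi}$. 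The key fact, which I would isolate as Proposition \ref{direction}, is that the condition $\int_{t_k}^{t_{k+1}}h^{\pi}=0$ forces $D_{h^{\pi}}B_{t_k}=0$ at every node and hence $D_{h^{\pi}}\dot{B}_t^{\pi}=0$; differentiating (\ref{SDE}) in the direction $h^{\pi}$ therefore annihilates the explicit noise and, by the chain rule, leaves $D_{h^{\pi}}X_t^{\pi}=R_t\,D_{h^{\pi}}Y$, where $R_t=\partial_Y X_t^{\pi}$ is the Jacobian of the flow in the initial datum. Because the diffusion is linear with deterministic $\sigma$, the Hu-{\O}ksendal reduction recasts (\ref{SDE}) as a \emph{pathwise} random ODE whose only stochastic input is the lognormal factor $\exp^{\diamond}\{\int_s^{t}\sigma\,\dot{B}^{\pi}\}$; the classical monotonicity of an ODE flow in its initial value then gives $R_t>0$, while the lognormal character of that factor yields $R_t^{\pm1}\in L^p$ for all $p$. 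Combined with $E[|D_{h^{\pi}}Y|^{-q}]<\infty$ for some $q>4$, H\"older's inequality produces $(D_{h^{\pi}}X_t^{\pi})^{-1}\in L^4$, exactly the integrability the threshold $q>4$ is designed to secure.

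With nondegeneracy established I would finish part (1) by the one-dimensional integration-by-parts argument. From $D_{h^{\pi}}(\phi(X_t^{\pi}))=\phi'(X_t^{\pi})D_{h^{\pi}}X_t^{\pi}$ and the duality between $D$ and the divergence $\delta$, testing against the element $h^{\pi}(D_{h^{\pi}}X_t^{\pi})^{-1}$ of the domain of $\delta$ gives $E[\phi'(X_t^{\pi})]=E[\phi(X_t^{\pi})\,H_t]$ with weight $H_t=(D_{h^{\pi}}X_t^{\pi})^{-1}\delta(h^{\pi})+(D_{h^{\pi}}X_t^{\pi})^{-2}D_{h^{\pi}}(D_{h^{\pi}}X_t^{\pi})$. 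The condition $q>4$ is what places $H_t$ in $L^1$: the first summand pairs $(D_{h^{\pi}}X_t^{\pi})^{-1}\in L^2$ with the Gaussian $\delta(h^{\pi})\in L^2$, the second pairs $(D_{h^{\pi}}X_t^{\pi})^{-1}\in L^4$ with $D_{h^{\pi}}(D_{h^{\pi}}X_t^{\pi})\in L^2$, the latter supplied by $X_t^{\pi}\in\mathbb{D}^{2,2}$. Taking $\phi$ to approximate indicators, the law of $X_t^{\pi}$ admits the density $p^{\pi}(t,x)=E[\mathbf{1}_{\{X_t^{\pi}>x\}}H_t]$, which is bounded by $\|H_t\|_{L^1}$ and, since absolute continuity excludes atoms, continuous at every $x$ by dominated convergence.

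For part (2) I would differentiate $t\mapsto\int\phi\,p^{\pi}(t,\cdot)\,dx=E[\phi(X_t^{\pi})]$ and split the right-hand side of (\ref{SDE}). The drift contributes $\int b(t,x)\phi'(x)p^{\pi}(t,x)\,dx$, the first-order part of $\mathcal{L}_x$. For the Wick term I would use the first-chaos identity $G\diamond\dot{B}_t^{\pi}=G\,\delta(f_t^{\pi})-\langle DG,f_t^{\pi}\rangle$, with $f_t^{\pi}=\mathbf{1}_{[t_k,t_{k+1}]}/(t_{k+1}-t_k)$ for $t\in[t_k,t_{k+1}[$, followed by one integration by parts; with $G=X_t^{\pi}$ and against $F=\phi'(X_t^{\pi})$ this collapses to $E[\phi'(X_t^{\pi})(X_t^{\pi}\diamond\dot{B}_t^{\pi})]=E[\phi''(X_t^{\pi})\,X_t^{\pi}\,D_{f_t^{\pi}}X_t^{\pi}]$. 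Conditioning on $X_t^{\pi}$ and setting $g(t,x):=E[D_{f_t^{\pi}}X_t^{\pi}\mid X_t^{\pi}=x]$ turns the Wick term into $\int\sigma(t)xg(t,x)\phi''(x)p^{\pi}(t,x)\,dx$, the second-order part of $\mathcal{L}_x$. Thus $\frac{d}{dt}\int\phi\,p^{\pi}\,dx=\int(\mathcal{L}_x\phi)\,p^{\pi}\,dx$ for every $\phi\in C_c^{\infty}$, which is precisely $(\partial_t+\mathcal{L}_x)^{\ast}p^{\pi}=0$ in the distributional sense; and Jensen's inequality bounds $\int|g(t,x)|^q p^{\pi}(t,x)\,dx$ by $E[|D_{f_t^{\pi}}X_t^{\pi}|^q]$, finite for every $q$ since $X_t^{\pi}\in\mathbb{D}^{1,p}$ for all $p$.

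The main obstacle is the nondegeneracy step: the Wick product blocks the naive pathwise comparison that would give $R_t>0$ directly, and everything rests on two structural facts, the annihilation $D_{h^{\pi}}\dot{B}_t^{\pi}=0$ and the linearity of the diffusion, which together reduce the problem to a genuine flow Jacobian and transfer the negative-moment bound from $Y$ to $X_t^{\pi}$. A secondary point, settled throughout by the $\mathbb{D}^{2,p}$ regularity, is the rigorous justification of interchanging $D_{h^{\pi}}$, $\delta$, and $\tfrac{d}{dt}$ with the expectation.
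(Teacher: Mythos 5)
Your proposal is correct and follows essentially the same route as the paper: the same distinguished direction $h^{\pi}$ annihilating $D_{h^{\pi}}\dot{B}^{\pi}_t$ so that $D_{h^{\pi}}X_t^{\pi}$ satisfies a closed linear homogeneous equation transferring nondegeneracy from $Y$, the same Malliavin integration-by-parts criterion with the weight $h^{\pi}/D_{h^{\pi}}X_t^{\pi}$ for the bounded continuous density, and the same duality-plus-conditioning step defining $g(t,x)=E[D_{\partial_tK_t^{\pi}}X_t^{\pi}\mid X_t^{\pi}=x]$ with the Jensen bound for its integrability. The only differences are presentational (you differentiate $E[\varphi(X_t^{\pi})]$ in $t$ where the paper states an anticipating It\^o formula, and you phrase the explicit Gjessing-formula solution for $D_{h^{\pi}}X_t^{\pi}$ as a flow Jacobian), and you should note that the strict inequality $q>4$ is consumed by the small loss from the translation operator $\mathcal{T}_{-K^{\pi}_{s,t}}$ mapping $L^{q}$ only into $L^{r}$ for $r<q$, which your moment accounting glosses over.
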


\begin{remark}
We observe that  the assumptions on $Y$ entail the absolute continuity of its law with respect to the one dimensional Lebesgue measure. In fact, contrary to the assumptions usually adopted for the study of the absolute continuity for exact solutions of SDEs, here we need the initial condition to have a density. The stochastic equation (\ref{SDE}), being an approximated version of an It\^o SDE, does not possess the same smoothing properties of the original equation. 
\end{remark}

\begin{remark}
The way we prove the existence of the function $g$ appearing in the operator $\mathcal{L}_x$ does not give us information about its regularity and sign (see formula (\ref{definition of g}) below). Therefore, we don't know whether the existence of a density for the law of $X_t^{\pi}$ can be deduced from the properties of the operator $\mathcal{L}_x$ in (\ref{PDE}). From this point of view, the criteria for absolute continuity based on the Malliavin calculus as utilized in this paper turns out to be crucial. 
\end{remark}

The paper is organized as follows: in Section \ref{proofs} we begin recalling definitions and auxiliary results from the Malliavin calculus and analysis on the Wiener space which will be employed in the proof of Theorem \ref{main theorem}; Section \ref{subsection 1} deals with the proof of the absolute continuity of the law of the Wong-Zakai approximation while Section \ref{subsection 2} addresses the derivation of the Fokker-Planck-type equation solved by the density. Here, an anticipating It\^o formula for the Wong-Zakai approximation (see Theorem \ref{Ito formula} below) plays a major role. An example illustrating the results previously obtained closes the section. 

\section{Proof of Theorem \ref{main theorem}}\label{proofs}

The proof of our main theorem will be divided in two parts: the existence of the density in Section \ref{subsection 1} and the Fokker-Planck equation in Section \ref{subsection 2}. We first set the notation and recall few auxiliary results from the Malliavin Calculus. For more details we refer the reader to one of the books Bogachev \cite{Bogachev}, Hu \cite{Hu}, Janson \cite{Janson} and Nualart \cite{Nualart}. Here we adopt the presentation of \cite{Nualart}.\\

\noindent Let $\mathcal{S}$ denote the class of \emph{smooth} random variables of the form
\begin{eqnarray}\label{smooth r.v.}
F=\varphi\left(\int_0^Th_1(u)dB_u,..., \int_0^Th_n(u)dB_u\right)
\end{eqnarray}
where $\varphi\in C_{\mathcal{P}}^{\infty}(\mathbb{R}^n)$ (the space of infinitely differentiable functions having, together with all their partial derivatives, polynomial growth), the functions $h_1$, ...,$h_n$ are elements of $L^2([0,T])$ and $n\geq 1$ . In the sequel to ease the notation we will set $H:=L^2([0,T])$ and denote by $\Vert\cdot\Vert_p$ the norm in $L^p(\Omega)$. We note that $\mathcal{S}$ is dense in $L^p(\Omega)$ for all $p\geq 1$. \\
The \emph{derivative} of a smooth random variable $F$ of the form (\ref{smooth r.v.}) is the $H$-valued random variable $DF$ given by
\begin{eqnarray*}
D_tF=\sum_{i=1}^n\partial_i \varphi\left(\int_0^Th_1(u)dB_u,..., \int_0^Th_n(u)dB_u\right)h_i(t),\quad t\in [0,T].
\end{eqnarray*}
It is easy to see that for any $h\in H$ and smooth random variables $F$ and $G$ we have the following integration-by-parts formula 
\begin{eqnarray*}
E[GD_hF] = E\left[-FD_hG + FG\int_0^Th(t)dB_t\right].
\end{eqnarray*}
Here $E[\cdot]$ denotes the expectation on the probability space $(\Omega,\mathcal{F}, P)$ while $D_hZ:=\int_0^TD_uZ\cdot h(u)du$ stands for the \emph{directional derivative} of $Z$ in the direction $h\in H$.\\
By means of the previous identity one can prove that the operator $D$ is closable from $L^p(\Omega)$ to $L^p(\Omega;H)$
for any $p\geq 1$. Therefore, one can define the space $\mathbb{D}^{1,p}$ as the closure of $\mathcal{S}$ with
respect to the norm
\begin{eqnarray*}
\Vert F\Vert_{1,p} =\left(E\left[|F|^p\right] + E\left[\vert DF\vert^p_H\right]\right)^{\frac{1}{p}}.
\end{eqnarray*}
Iterating the action of the operator $D$ in such a way that for a smooth random variable $F$ the iterated derivative $D^kF$ is a random variable with values in $H^{\otimes k}$,  we introduce on $\mathcal{S}$ for every $p\geq 1$ and any natural number $k\geq 1$ the seminorm defined by
\begin{eqnarray*}
\Vert F\Vert_{k,p}=\left(E\left[|F|^p\right] +\sum_{j=1}^kE\left[\vert D^j F\vert^p_{H^{\otimes j}}\right]\right)^{\frac{1}{p}}.
\end{eqnarray*}
We will denote by $\mathbb{D}^{k,p}$ the completion of $\mathcal{S}$ with respect to the norm $\Vert\cdot\Vert_{k,p}$.\\

\noindent We now introduce the \emph{divergence operator} $\delta$ which is the adjoint of the operator $D$. It is a closed and unbounded operator on $L^2(\Omega;H)$ with values in $L^2(\Omega)$ such that:
\begin{itemize}
\item the domain of $\delta$ is the set of $H$-valued square integrable random variables $u\in L^2(\Omega;H)$ such that
\begin{eqnarray*}
|E[\langle DF, u\rangle_H]|\leq c\Vert F\Vert_2,
\end{eqnarray*}
for all $F\in\mathbb{D}^{1,2}$, where $c$ is a constant depending only on $u$
\item if $u$ belongs to the domain of $\delta$, then $\delta(u)$ is the element of $L^2(\Omega)$ characterized
by
\begin{eqnarray*}
E[F\delta(u)]=E[\langle DF, u\rangle_H]
\end{eqnarray*}
for any $F\in\mathbb{D}^{1,2}$.
\end{itemize}
One can prove that $\mathbb{D}^{1,2}(H)$ is included in the domain of $\delta$. In particular, if $F\in\mathbb{D}^{1,2}$ and $h\in H$ , then $Fh\in\mathbb{D}^{1,2}(H)$ and
\begin{eqnarray}\label{definition Wick}
F\diamond\int_0^Th(u)dB_u:=\delta(Fh)\in L^2(\Omega)
\end{eqnarray}
is called the \emph{Wick product} of $F$ and $\int_0^Th(u)dB_u$. This definition can be generalized to include more general random variables in the place of $\int_0^Th(u)dB_u$. Moreover, it follows from the properties of $D$ and $\delta$ that
\begin{eqnarray*}
F\diamond\int_0^Th(u)dB_u=F\cdot\int_0^Th(u)dB_u-D_hF.
\end{eqnarray*}
\begin{remark}
There are different ways to define the Wick product of two random variables: via the Wiener-It\^o chaos expansion or specifying its action on stochastic exponentials (see below) or through the so-called $S$-transform. Here, we preferred to use (\ref{definition Wick}) since it clearly shows the duality between Wick product and Malliavin derivative. Other approaches can be found in \cite{HOUZ} and \cite{Hu}. We also remark that, in addition to its key role in the theory of It\^o-Skorohod integration (\ref{core}), the Wick product has important probabilistic interpretations in the Gaussian and Poissonian analysis. See \cite{LS}, \cite{LS Bernoulli}, \cite{L} and the references quoted there.    
\end{remark}
For $h\in H$ we define the \emph{stochastic exponential} to be a random variable of the form
\begin{eqnarray*}
\mathcal{E}(h):=\exp\left\{\int_0^Th(u)dB_u-\frac{1}{2}\int_0^Th^2(u)du\right\}.
\end{eqnarray*}
We remark that the span of such family of elements is dense in $L^p(\Omega)$ and $\mathbb{D}^{k,p}$ for any $p\geq 1$ and $k\in\mathbb{N}$. For $g\in H$ we also define the \emph{translation operator} $\mathcal{T}_g$ as the operator that shifts the Brownian path by the function $\int_0^{\cdot}g(u)du$; more precisely, the action of $\mathcal{T}_g$ on a stochastic exponential is given by
\begin{eqnarray*}
\mathcal{T}_g\mathcal{E}(h):=\mathcal{E}(h)\cdot\exp\{\langle h,g\rangle_H\}.
\end{eqnarray*}
There is a close relationship between Wick product and translation operators; it is the so called Gjessing formula:
\begin{eqnarray}\label{Gjessing}
F\diamond\mathcal{E}(h)=\mathcal{T}_{-h}X\cdot\mathcal{E}(h)
\end{eqnarray} 
which is valid for any $h\in H$ and $F\in L^p(\Omega)$ for some $p> 1$. We refer to Holden et al. \cite{HOUZ} and Janson \cite{Janson} for more details on Wick product and translation operators.\\

\noindent We complete this preliminary part observing that the polygonal approximation $\{B_t^{\pi}\}_{t\in [0,T]}$ of the Brownian motion $\{B_t\}_{t\in [0,T]}$ defined in (\ref{polygonal}) can be written also in the compact form
\begin{eqnarray}\label{B smooth}
B_t^{\pi}=\int_0^TK_t^{\pi}(u)dB_u
\end{eqnarray} 
with
\begin{eqnarray}\label{def K}
K_t^{\pi}(u):=\sum_{k=0}^{n-1}\left(1_{[0,t_k[}(u)+\frac{t-t_k}{t_{k+1}-t_k}1_{[t_k,t_{k+1}[}(u)\right)1_{[t_k,t_{k+1}[}(t).
\end{eqnarray}
It is straightforward to see that $0\leq K_t^{\pi}(u)\leq 1$ for all $t,u\in [0,T]$ and 
\begin{eqnarray*}
\dot{B_t^{\pi}}=\frac{B_{t_{k+1}}-B_{t_k}}{t_{k+1}-t_k}\quad\mbox{if }t\in [t_{k},t_{k+1}[.
\end{eqnarray*}
Moreover, in analogy with (\ref{B smooth}) we set
\begin{eqnarray}\label{def K dot}
\dot{B}^{\pi}_t=\int_0^T\partial_tK_t^{\pi}(u)dB_u\quad\mbox{ with }\quad\partial_tK_t^{\pi}(u)=\sum_{k=0}^{n-1}\frac{1}{t_{k+1}-t_k}1_{[t_k,t_{k+1}[}(u)1_{[t_k,t_{k+1}[}(t).
\end{eqnarray}

\subsection{Existence of the density}\label{subsection 1}

The aim of the present section is to prove the first statement in Theorem \ref{main theorem}, i.e. the absolute continuity of the law of $X_t^{\pi}$ with respect to the one dimensional Lebesgue measure. We will assume that the function $\sigma$ in the diffusion coefficient of equation (\ref{SDE}) is identically equal to one. The general case can be recovered with straightforward modifications. \\
As a first step we investigate the Malliavin regularity of $X_t^{\pi}$ and write an  equation for $DX_t^{\pi}$.
\begin{theorem}\label{theorem on DX}
For any $t\in [s,T]$ the random variable $X_t^{\pi}$ belongs to $\mathbb{D}^{2,p}$ for all $p\geq 1$. Moreover,
\begin{eqnarray}\label{equation for DX}
D_uX_t^{\pi}&=&D_uY+\int_s^tb_x(r,X_r^{\pi})D_uX_r^{\pi}dr+\int_s^tD_uX_r^{\pi}\diamond\dot{B_r^{\pi}}dr\\
&&+\int_s^tX_r^{\pi}\partial_r{K}^{\pi}_{r}(u)dr\nonumber
\end{eqnarray}
\end{theorem}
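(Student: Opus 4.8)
The plan is to avoid attacking (\ref{equation for DX}) directly by a Picard scheme in the spaces $\mathbb{D}^{k,p}$, because such a scheme does not close. Indeed, writing the Wick term as $X_r^\pi\diamond\dot B_r^\pi=\delta(X_r^\pi\,\partial_rK_r^\pi)$ and using the standard commutation relation $D_u\delta(v)=v(u)+\delta(D_uv)$, one sees that the equation for $DX_t^\pi$ contains the term $\int_s^t\!\int_0^T D^2_{v,u}X_r^\pi\,\partial_rK_r^\pi(v)\,dv\,dr$; thus the equation for $DX$ involves $D^2X$, that for $D^2X$ involves $D^3X$, and so on. This loss of one derivative at each level is the typical symptom of the anticipating (Skorohod) nature of (\ref{SDE}). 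I would instead exploit the fact that the diffusion is linear in the Wick sense to reduce (\ref{SDE}) to a genuinely pathwise random ODE driven by finite-dimensional noise, for which Malliavin regularity is transparent.

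Set $\eta_t:=K_t^\pi-K_s^\pi\in H$, so that $\int_0^T\eta_t\,dB=B_t^\pi-B_s^\pi$ and $\mathcal{E}(\eta_t)$ solves the homogeneous equation $\dot\Phi_t=\Phi_t\diamond\dot B_t^\pi$, $\Phi_s=1$. Seeking the solution in the Wick variation-of-constants form $X_t^\pi=S_t\diamond\mathcal{E}(\eta_t)$ and using Gjessing's formula (\ref{Gjessing}) to convert each Wick product into an ordinary product precomposed with a translation, a direct computation collapses (\ref{SDE}) into the pathwise Cauchy problem
\[
\dot S_t=b\big(t,\,S_t\,\mathcal{E}(\eta_t)\,e^{\langle\eta_t,\eta_t\rangle_H}\big)\,\mathcal{E}(\eta_t)^{-1}e^{-\langle\eta_t,\eta_t\rangle_H},\qquad S_s=Y,
\]
in which both the Wick product and the anticipating derivative term have disappeared. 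The decisive feature is that the $S$-derivative of the right-hand side equals $b_x$ evaluated at the corresponding point, hence is bounded by hypothesis; the coefficient is therefore globally Lipschitz in $S$ uniformly in $\omega$, the problem has a unique global solution, and $S_t$ depends smoothly on the datum $Y$ and on the parameters $\mathcal{E}(\eta_r)^{\pm1}$, $r\in[s,t]$.

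Because the partition $\pi$ is fixed and finite, $B_r^\pi$—and hence $\mathcal{E}(\eta_r)$—is a deterministic smooth function of the finitely many increments $B_{t_1},\dots,B_{t_n}$. Consequently $S_t=G(B_{t_1},\dots,B_{t_n},Y)$ for a map $G$ that is twice continuously differentiable in all its arguments, the boundedness of the first and second $x$-derivatives of $b$ guaranteeing that the ODE flow is $C^2$ in its data. Malliavin differentiability of $X_t^\pi=S_t\diamond\mathcal{E}(\eta_t)$ then follows from the chain rule for compositions together with $Y\in\mathbb{D}^{2,p}$ and the fact that $\mathcal{E}(\eta_r)^{\pm1}$ belong to $\mathbb{D}^{k,p}$ for all $k,p$. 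The $L^p$ bounds on $DS_t$ and $D^2S_t$ are obtained by differentiating the ODE, applying Gronwall's lemma with the bounded Lipschitz constant, and invoking the finiteness of all moments of $\mathcal{E}(\eta_r)^{\pm1}$; this yields $X_t^\pi\in\mathbb{D}^{2,p}$ for every $p\ge1$.

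With $\mathbb{D}^{2,p}$-membership in hand, I would finally recover (\ref{equation for DX}) by differentiating the integral form $X_t^\pi=Y+\int_s^tb(r,X_r^\pi)\,dr+\int_s^tX_r^\pi\diamond\dot B_r^\pi\,dr$ of (\ref{SDE}) term by term: the chain rule gives $D_u\!\int_s^tb(r,X_r^\pi)\,dr=\int_s^tb_x(r,X_r^\pi)\,D_uX_r^\pi\,dr$, while applying the commutation relation between $D$ and $\delta$ to $X_r^\pi\diamond\dot B_r^\pi=\delta(X_r^\pi\,\partial_rK_r^\pi)$ produces $D_u(X_r^\pi\diamond\dot B_r^\pi)=X_r^\pi\,\partial_rK_r^\pi(u)+(D_uX_r^\pi)\diamond\dot B_r^\pi$; the interchange of $D_u$ with $\int_s^t dr$ is legitimate thanks to the regularity already established. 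The main obstacle is precisely the loss-of-derivative phenomenon highlighted at the outset: the argument stands or falls with the reduction, and the only genuinely delicate point thereafter is the bookkeeping of integrability exponents in the Gronwall estimates for $DS_t$ and $D^2S_t$, which closes solely because the factors $\mathcal{E}(\eta_t)^{\pm1}$ possess finite moments of every order.
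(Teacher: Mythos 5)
Your proposal is correct and follows essentially the same route as the paper: the reduction $X_t^{\pi}=Z_t^{\pi}\diamond\mathcal{E}(K_{s,t}^{\pi})$ combined with Gjessing's formula to obtain a pathwise ODE with a globally Lipschitz right-hand side (since the $S$-derivative of the coefficient reduces to $b_x$), Gronwall estimates for the first and second Malliavin derivatives, and finally differentiation of the integral form of (\ref{SDE}) using $D_u\delta(v)=v(u)+\delta(D_uv)$. Your intermediate observation that $S_t$ is a $C^2$ function of finitely many Gaussian increments and of $Y$ is a harmless variant of the paper's direct differentiation of the integral equation for $Z_t^{\pi}$, and the two arguments coincide in substance.
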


\begin{remark}
The need for checking that $X_t^{\pi}$ belongs to $\mathbb{D}^{2,p}$ in order to write the equation for $D_uX_t^{\pi}$ is due to the fact that integrand in
\begin{eqnarray*}
\int_s^tD_uX_r^{\pi}\diamond\dot{B_r^{\pi}}dr
\end{eqnarray*}
requires a control on the second order Malliavin derivative of $X_r^{\pi}$ to be well defined. In fact, from (\ref{definition Wick})  we have
\begin{eqnarray*}
D_uX_r^{\pi}\diamond\dot{B_r^{\pi}}=\delta(D_uX_r^{\pi}\partial_rK_r^{\pi}(\cdot))
\end{eqnarray*} 
implying that $D_uX_r^{\pi}\in\mathbb{D}^{1,2}$ is a sufficient conditions for the membership of $D_uX_r^{\pi}\partial_rK_r^{\pi}(\cdot)$ to the domain of $\delta$ (see Proposition 1.3.1 in \cite{Nualart}). 
\end{remark}

\begin{proof}
Employing a standard reduction method in combination with the properties of the Wick product, we can represent the solution $\{X_t^{\pi}\}_{t\in [s,T]}$ of equation (\ref{SDE}) as $X_t^{\pi}=Z_t^{\pi}\diamond\mathcal{E}(K_{s,t}^{\pi})$ where $K_{s,t}^{\pi}(\cdot):=K_t^{\pi}(\cdot)-K_s^{\pi}(\cdot)$ and  $\{Z_t^{\pi}\}_{t\in [s,T]}$ is the unique solution of the equation
\begin{eqnarray}
\left\{ \begin{array}{ll}\label{equation for Z}
\dot{Z}_t^{\pi}=b(t,Z_t^{\pi}\diamond\mathcal{E}(K_{s,t}^{\pi}))\diamond \mathcal{E}(-K_{s,t}^{\pi})\\
Z_s^{\pi}=Y
\end{array}\right.
\end{eqnarray}
(see \cite{BL} for the details of this technique). Moreover, according to the Gjessing formula (\ref{Gjessing}) we have
\begin{eqnarray*}
Z_t^{\pi}\diamond\mathcal{E}(K_{s,t}^{\pi})=\left(\mathcal{T}_{-K_{s,t}^{\pi}}Z_t^{\pi}\right)\cdot\mathcal{E}(K_{s,t}^{\pi}).
\end{eqnarray*}
Therefore, since $\mathcal{E}(K_{s,t}^{\pi})$ belongs to $\mathbb{D}^{k,p}$ for all $p\geq 1$ and $k\in\mathbb{N}$ and $\mathcal{T}_{-K_{s,t}^{\pi}}$ maps  $\mathbb{D}^{k,p}$ into $\mathbb{D}^{k,q}$ for any $q<p$ and $k\in\mathbb{N}$, the first part of the statement will follow from Proposition 1.5.6 in \cite{Nualart} once we prove that  $Z_t^{\pi}$ belongs to $\mathbb{D}^{2,p}$ for all $p\geq 1$.\\
Resorting once more to the Gjessing formula we can rewrite equation (\ref{equation for Z}) as
\begin{eqnarray}\label{equation for Z bis}
Z_t^{\pi}=Y+\int_s^tb(r,Z_r^{\pi}\cdot\mathcal{E}(-K_{s,r}^{\pi})^{-1})\cdot\mathcal{E}(-K_{s,r}^{\pi})dr.
\end{eqnarray}
We first derive an upper bound for $|Z_t^{\pi}|$ which will be useful to control the norms in $\mathbb{D}^{2,p}$:
\begin{eqnarray*}
|Z_t^{\pi}|&\leq&|Y|+\int_s^t|b(r,Z_r^{\pi}\cdot\mathcal{E}(-K_{s,r}^{\pi})^{-1})|\cdot\mathcal{E}(-K_{s,r}^{\pi})dr\\
&\leq&|Y|+M\int_s^t(1+|Z_r^{\pi}|\cdot\mathcal{E}(-K_{s,r}^{\pi})^{-1})\cdot\mathcal{E}(-K_{s,r}^{\pi})dr\\
&=&|Y|+M\int_s^t\mathcal{E}(-K_{s,r}^{\pi})dr+M\int_s^t|Z_r^{\pi}|dr\\
&\leq&|Y|+M\int_s^T\mathcal{E}(-K_{s,r}^{\pi})dr+M\int_s^t|Z_r^{\pi}|dr.
\end{eqnarray*} 
The positive constant $M$ utilized above comes from the inequality
\begin{eqnarray*}
|b(t,x)|\leq M(1+|x|),\quad t\in [0,T], x\in\mathbb{R}
\end{eqnarray*}
which follows from the assumptions on $b$ (in particular the boundedness of the first partial derivative with respect to $x$). By the Gronwall inequality we deduce that
\begin{eqnarray*}
|Z_t^{\pi}|&\leq&\left(|Y|+M\int_s^T\mathcal{E}(-K_{s,r}^{\pi})dr\right)e^{M(t-s)}
\end{eqnarray*}
and hence
\begin{eqnarray*}
\sup_{t\in [s,T]}|Z_t^{\pi}|&\leq&\left(|Y|+M\int_s^T\mathcal{E}(-K_{s,r}^{\pi})dr\right)e^{M(T-s)}.
\end{eqnarray*}
Computing the $L^p(\Omega)$-norms we get
\begin{eqnarray}\label{bound for Z}
\left\Vert\sup_{t\in [s,T]}|Z_t^{\pi}|\right\Vert_p&\leq&\left(\Vert Y\Vert_p+M\int_s^T\Vert\mathcal{E}(-K_{s,r}^{\pi})\Vert_pdr\right)e^{M(T-s)}\nonumber\\
&=&\left(\Vert Y\Vert_p+M\int_s^T\exp\{(p-1)|K_{s,r}^{\pi}|^2_H/2\}dr\right)e^{M(T-s)}\nonumber\\
&\leq&\left(\Vert Y\Vert_p+C\right)e^{M(T-s)}
\end{eqnarray}
where $C$ is a positive constant depending on $p$, $T$ and $M$ (recall that $0\leq K_t^{\pi}(u)\leq 1$ and hence that $|K_{s,t}^{\pi}|\leq 1$). We are now ready to compute the Malliavin derivative of $Z_t^{\pi}$; from equation (\ref{equation for Z bis}) we obtain
\begin{eqnarray*}
D_uZ_t^{\pi}&=&D_uY+\int_s^tD_u[b(r,Z_r^{\pi}\cdot\mathcal{E}(-K_{s,r}^{\pi})^{-1})\cdot\mathcal{E}(-K_{s,r}^{\pi})]dr\\
&=&D_uY+\int_s^tb_x(r,Z_r^{\pi}\cdot\mathcal{E}(-K_{s,r}^{\pi})^{-1})(D_uZ_r^{\pi}+Z_r^{\pi}K_{s,r}^{\pi}(u))dr\\
&&-\int_s^tb(r,Z_r^{\pi}\cdot\mathcal{E}(-K_{s,r}^{\pi})^{-1})\cdot\mathcal{E}(-K_{s,r}^{\pi})K_{s,r}^{\pi}(u)dr
\end{eqnarray*}
which gives
\begin{eqnarray*}
|D_uZ_t^{\pi}|&\leq&|D_uY|+L\int_s^t|D_uZ_r^{\pi}|dr+L\int_s^t|Z_r^{\pi}||K_{s,r}^{\pi}(u)|dr\\
&&+M\int_s^t(1+|Z_r^{\pi}\cdot\mathcal{E}(-K_{s,r}^{\pi})^{-1}|)\cdot\mathcal{E}(-K_{s,r}^{\pi})|K_{s,r}^{\pi}(u)|dr\\
&\leq&|D_uY|+L\int_s^t|D_uZ_r^{\pi}|dr+L\int_s^t|Z_r^{\pi}|dr\\
&&+M\int_s^t\mathcal{E}(-K_{s,r}^{\pi})dr+M\int_s^t|Z_r^{\pi}|dr\\
&\leq&|D_uY|+L\int_s^t|D_uZ_r^{\pi}|dr+(L+M)\int_s^T|Z_r^{\pi}|dr\\
&&+M\int_s^T\mathcal{E}(-K_{s,r}^{\pi})dr.
\end{eqnarray*}
Here, $L$ stands for the Lipschitz constant of $b$ with respect to the variable $x$. Then, by the Gronwall inequality we obtain
\begin{eqnarray*}
|D_uZ_t^{\pi}|\leq \left(|D_uY|+(L+M)\int_s^T|Z_r^{\pi}|dr+M\int_s^T\mathcal{E}(-K_{s,r}^{\pi})dr\right)e^{L(t-s)}
\end{eqnarray*}
which in turn implies
\begin{eqnarray*}
\vert DZ_t^{\pi}\vert_{H}&\leq& e^{L(t-s)}\left(\vert DY\vert_H+\sqrt{T}(L+M)\int_s^T|Z_r^{\pi}|dr+\sqrt{T}M\int_s^T\mathcal{E}(-K_{s,r}^{\pi})dr\right)\\
&\leq&e^{L(t-s)}\left(\vert DY\vert_H+\sqrt{T}(L+M)(T-s)\sup_{r\in [s,T]}|Z_r^{\pi}|+\sqrt{T}M\int_s^T\mathcal{E}(-K_{s,r}^{\pi})dr\right).
\end{eqnarray*}
With the help of estimate (\ref{bound for Z}) we can conclude that
\begin{eqnarray*}
\Vert\vert DZ_t^{\pi}\vert_{H}\Vert_p\leq C_1\Vert Y\Vert_{\mathbb{D}^{1,p}}+C_2
\end{eqnarray*} 
where $C_1$ and $C_2$  are positive constants depending on $p$, $T$, $M$ and $L$. This proves that $Z_t^{\pi}$ belongs to $\mathbb{D}^{1,p}$. For the second order Malliavin derivative of $Z_t^{\pi}$ one proceeds as before differentiating twice the identity (\ref{equation for Z bis}) and resorting to the Grownwall inequality for the estimation of the  norm of $D_vD_uZ_t^{\pi}$. \\
To get identity (\ref{equation for DX}) one has simply to differentiate the equality
\begin{eqnarray*}
X_t^{\pi}=Y+\int_s^tb(r,X_r^{\pi})dr+\int_s^tX_r^{\pi}\diamond\dot{B}_r^{\pi}dr
\end{eqnarray*}
in connection with the following chain rule for the Wick product:
\begin{eqnarray*}
D_u(X_r^{\pi}\diamond\dot{B}_r^{\pi})&=&(D_uX_r^{\pi})\diamond\dot{B}_r^{\pi}+X_r^{\pi}\diamond D_u\dot{B}_r^{\pi}\\
&=&(D_uX_r^{\pi})\diamond\dot{B}_r^{\pi}+X_r^{\pi}\diamond \partial_rK_r^{\pi}(u)\\
&=&(D_uX_r^{\pi})\diamond\dot{B}_r^{\pi}+X_r^{\pi}\cdot\partial_rK_r^{\pi}(u)
\end{eqnarray*}
The proof is complete.
\end{proof}

\noindent Equation (\ref{equation for DX}) shows that $D_uX_r^{\pi}$ solves a linear differential equation containing the non homogeneous term
\begin{eqnarray*}
\int_s^tX_r^{\pi}\partial_r{K}^{\pi}_{r}(u)dr.
\end{eqnarray*}
In the next proposition we will find a direction $h^{\pi}\in H$ along which the quantity above will be identically zero and this will produce an explicit expression for the resulting directional derivative of $X_r^{\pi}$.

\begin{proposition}\label{direction}
There exists a function $h^{\pi}\in H$ such that $h^{\pi}\neq 0$ almost everywhere and $\int_0^T\partial_rK^{\pi}_{r}(u)h^{\pi}(u)du=0$ for all $r\in [0,T]$. Moreover, 
\begin{eqnarray}\label{D_hX}
D_{h^{\pi}}X_t^{\pi}=\mathcal{T}_{-K_{s,t}^{\pi}}D_{h^{\pi}}Y\cdot\exp\left\{\int_s^tb_x(r,\mathcal{T}_{-K_{s,t}^{\pi}}\mathcal{T}_{K_{s,r}^{\pi}}X_r^{\pi})dr\right\}\cdot\mathcal{E}(K_{s,t}^{\pi}).
\end{eqnarray}
\end{proposition}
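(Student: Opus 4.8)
The plan is to treat the two assertions separately, beginning with the existence of the direction $h^{\pi}$. Recalling from (\ref{def K dot}) that for $r\in[t_j,t_{j+1}[$ one has $\partial_rK_r^{\pi}(u)=\frac{1}{t_{j+1}-t_j}1_{[t_j,t_{j+1}[}(u)$, the requirement $\int_0^T\partial_rK_r^{\pi}(u)h^{\pi}(u)du=0$ for every $r$ is equivalent to the vanishing of the mean of $h^{\pi}$ on each subinterval $[t_k,t_{k+1}]$ of the partition, which is exactly the condition imposed on $h^{\pi}$ in the introduction. I would exhibit such a function explicitly, for instance by setting $h^{\pi}(u):=u-\frac{t_k+t_{k+1}}{2}$ for $u\in[t_k,t_{k+1}[$. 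This $h^{\pi}$ is bounded, hence in $H$; it has zero integral over each $[t_k,t_{k+1}]$ by symmetry about the midpoint; and it vanishes only at the finitely many midpoints, so it is nonzero almost everywhere. This settles the first claim and simultaneously identifies $h^{\pi}$ as the direction that annihilates the non-homogeneous term in (\ref{equation for DX}).

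Next I would derive an equation for the directional derivative $D_{h^{\pi}}X_t^{\pi}=\int_0^TD_uX_t^{\pi}h^{\pi}(u)du$ by multiplying (\ref{equation for DX}) by $h^{\pi}(u)$ and integrating in $u$. The first three terms transform in the obvious way once the $u$-integral is taken inside the time integral by Fubini: the drift term yields $\int_s^tb_x(r,X_r^{\pi})D_{h^{\pi}}X_r^{\pi}dr$, and the last (non-homogeneous) term yields $\int_s^tX_r^{\pi}\big(\int_0^T\partial_rK_r^{\pi}(u)h^{\pi}(u)du\big)dr=0$ by the choice of $h^{\pi}$. The delicate step is the Wick term: here I would write $D_uX_r^{\pi}\diamond\dot{B}_r^{\pi}=\delta(D_uX_r^{\pi}\partial_rK_r^{\pi}(\cdot))$ and use the linearity and closedness of $\delta$ to pull the integral $\int_0^T(\cdot)h^{\pi}(u)du$ through the divergence, obtaining $\delta(D_{h^{\pi}}X_r^{\pi}\partial_rK_r^{\pi}(\cdot))=D_{h^{\pi}}X_r^{\pi}\diamond\dot{B}_r^{\pi}$; the integrability needed to justify this interchange is supplied by $X_r^{\pi}\in\mathbb{D}^{2,p}$ from Theorem \ref{theorem on DX}. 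The upshot is that $W_t:=D_{h^{\pi}}X_t^{\pi}$ solves the homogeneous linear equation
\begin{eqnarray*}
W_t=D_{h^{\pi}}Y+\int_s^tb_x(r,X_r^{\pi})W_rdr+\int_s^tW_r\diamond\dot{B}_r^{\pi}dr,
\end{eqnarray*}
which is of the same type as (\ref{SDE}) with drift $x\mapsto b_x(r,X_r^{\pi})x$ and initial datum $D_{h^{\pi}}Y$, and hence possesses a unique solution by the reduction method and Gronwall argument already used for Theorem \ref{theorem on DX}.

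Finally I would solve this linear equation explicitly. Following the reduction scheme, I would look for a solution of the form $W_t=U_t\diamond\mathcal{E}(K_{s,t}^{\pi})$. Using the identity $\frac{d}{dt}\mathcal{E}(K_{s,t}^{\pi})=\mathcal{E}(K_{s,t}^{\pi})\diamond\dot{B}_t^{\pi}$ underlying the reduction method, the Wick term is absorbed automatically, and a short computation combined with Gjessing's formula (\ref{Gjessing}) shows that the ansatz solves the linear equation provided $U_t$ satisfies the ordinary linear differential equation $\dot{U}_t=b_x(t,\mathcal{T}_{K_{s,t}^{\pi}}X_t^{\pi})U_t$ with $U_s=D_{h^{\pi}}Y$; the key point here is that applying $\mathcal{T}_{-K_{s,t}^{\pi}}$ to the coefficient $b_x(t,\mathcal{T}_{K_{s,t}^{\pi}}X_t^{\pi})$ produces exactly $b_x(t,X_t^{\pi})$, so that the ordinary coefficient translates back to the required random coefficient. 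Solving the scalar equation gives $U_t=D_{h^{\pi}}Y\cdot\exp\{\int_s^tb_x(r,\mathcal{T}_{K_{s,r}^{\pi}}X_r^{\pi})dr\}$, and applying Gjessing's formula once more in the form $W_t=\mathcal{T}_{-K_{s,t}^{\pi}}U_t\cdot\mathcal{E}(K_{s,t}^{\pi})$, together with the fact that $\mathcal{T}_{-K_{s,t}^{\pi}}$ commutes with ordinary products and with composition by deterministic functions, yields precisely formula (\ref{D_hX}). I expect the main obstacle to lie in this last part: the bookkeeping of the $t$-dependent translation operators inside the time integral, and the rigorous justification of the interchange of $\delta$ with the directional integral, are where the argument must be carried out with care, whereas the existence of $h^{\pi}$ and the vanishing of the inhomogeneity are straightforward.
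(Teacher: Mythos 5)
Your proposal is correct and follows essentially the same route as the paper: the zero-mean condition on $h^{\pi}$ over each subinterval kills the inhomogeneous term in (\ref{equation for DX}), the reduction ansatz $W_t=U_t\diamond\mathcal{E}(K_{s,t}^{\pi})$ combined with Gjessing's formula turns the Wick-linear equation into a scalar ODE with coefficient $b_x(t,\mathcal{T}_{K_{s,t}^{\pi}}X_t^{\pi})$, and a final application of Gjessing yields (\ref{D_hX}). Your explicit construction of $h^{\pi}$ as the centered sawtooth is a small addition the paper omits, but it changes nothing in the argument.
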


\begin{proof}
Let $h^{\pi}$ be a function in $H$ such that $h^{\pi}\neq 0$ almost everywhere and
\begin{eqnarray*}
\frac{1}{t_{k+1}-t_k}\int_{t_k}^{t_{k+1}}h^{\pi}(u)du=0\quad\mbox{ for all }k\in\{0,..., n-1\}.
\end{eqnarray*}
Then, according to the second equation in (\ref{def K dot}) we have
\begin{eqnarray*}
\int_0^T\partial_rK^{\pi}_{r}(u)h^{\pi}(u)du&=&\sum_{k=0}^{n-1}\frac{1}{t_{k+1}-t_k}\int_{t_k}^{t_{k+1}}h^{\pi}(u)du\cdot1_{[t_k,t_{k+1}[}(r)\\
&=&0\quad\mbox{ for all }r\in [0,T].
\end{eqnarray*} 
If now we multiply both sides of equation (\ref{equation for DX}) by the function $h^{\pi}$ and integrate with respect to $u$ between zero and $T$, we see that the last term is zero and we are left with 
\begin{eqnarray}\label{equation for DX bis}
D_{h^{\pi}}X_t^{\pi}=D_{h^{\pi}}Y+\int_s^tb_x(r,X_r^{\pi})D_{h^{\pi}}X_r^{\pi}dr+\int_s^tD_{h^{\pi}}X_r^{\pi}\diamond\dot{B_r^{\pi}}dr.
\end{eqnarray}
Equation (\ref{equation for DX bis}) is linear and homogeneous in $D_{h^{\pi}}X_{\cdot}^{\pi}$. We now find its unique solution. First of all observe that employing the reduction method mentioned above we can write
\begin{eqnarray*}
D_{h^{\pi}}X_t^{\pi}=V^{\pi}_t\diamond\mathcal{E}(K^{\pi}_{s,t})
\end{eqnarray*} 
where $\{V_t^{\pi}\}_{t\in [s,T]}$ satisfies
\begin{eqnarray}\label{C}
V_t^{\pi}=D_{h^{\pi}}Y+\int_s^t(b_x(r,X_r^{\pi})(V_r^{\pi}\diamond\mathcal{E}(K^{\pi}_{s,r})))\diamond\mathcal{E}(-K^{\pi}_{s,r})dr
\end{eqnarray}
We note that applying the Gjessing formula (\ref{Gjessing}) twice we can write
\begin{eqnarray*}
(b_x(r,X_r^{\pi})(V_r^{\pi}\diamond\mathcal{E}(K^{\pi}_{s,r})))\diamond\mathcal{E}(-K^{\pi}_{s,r})&=&
(b_x(r,X_r^{\pi})\cdot\mathcal{T}_{-K^{\pi}_{s,r}}V_r^{\pi}\cdot\mathcal{E}(K^{\pi}_{s,r}))\diamond\mathcal{E}(-K^{\pi}_{s,r})\\
&=&\mathcal{T}_{K^{\pi}_{s,r}}(b_x(r,X_r^{\pi})\cdot\mathcal{T}_{-K^{\pi}_{s,r}}V_r^{\pi}\cdot\mathcal{E}(K^{\pi}_{s,r}))\cdot\mathcal{E}(-K^{\pi}_{s,r})\\
&=&b_x(r,\mathcal{T}_{K^{\pi}_{s,r}}X_r^{\pi})\cdot V_r^{\pi}\cdot\mathcal{T}_{K^{\pi}_{s,r}}\mathcal{E}(K^{\pi}_{s,r})\cdot\mathcal{E}(-K^{\pi}_{s,r})\\
&=&b_x(r,\mathcal{T}_{K^{\pi}_{s,r}}X_r^{\pi})\cdot V_r^{\pi}.
\end{eqnarray*}
Therefore, equation (\ref{C}) now reads
\begin{eqnarray*}
V_t^{\pi}&=&D_{h^{\pi}}Y+\int_s^t(b_x(r,X_r^{\pi})(V_r^{\pi}\diamond\mathcal{E}(K^{\pi}_{s,r})))\diamond\mathcal{E}(-K^{\pi}_{s,r})dr\\
&=&D_{h^{\pi}}Y+\int_s^tb_x(r,\mathcal{T}_{K^{\pi}_{s,r}}X_r^{\pi})\cdot V_r^{\pi}dr
\end{eqnarray*}
 implying that
\begin{eqnarray*}
V_t^{\pi}=D_{h^{\pi}}Y\exp\left\{\int_s^tb_x(r,\mathcal{T}_{K^{\pi}_{s,r}}X_r^{\pi})dr\right\}.
\end{eqnarray*}
Therefore,
\begin{eqnarray*}
D_{h^{\pi}}X_t^{\pi}&=&V^{\pi}_t\diamond\mathcal{E}(K^{\pi}_{s,t})\\
&=&\left(D_{h^{\pi}}Y\exp\left\{\int_s^tb_x(r,\mathcal{T}_{K^{\pi}_{s,r}}X_r^{\pi})dr\right\}\right)\diamond\mathcal{E}(K^{\pi}_{s,t})\\
&=&\mathcal{T}_{-K^{\pi}_{s,t}}D_{h^{\pi}}Y\cdot\mathcal{T}_{-K^{\pi}_{s,t}}\exp\left\{\int_s^tb_x(r,\mathcal{T}_{K^{\pi}_{s,r}}X_r^{\pi})dr\right\}\cdot\mathcal{E}(K^{\pi}_{s,t})\\
&=&\mathcal{T}_{-K^{\pi}_{s,t}}D_{h^{\pi}}Y\cdot\exp\left\{\int_s^tb_x(r,\mathcal{T}_{-K^{\pi}_{s,t}}\mathcal{T}_{K^{\pi}_{s,r}}X_r^{\pi})dr\right\}\cdot\mathcal{E}(K^{\pi}_{s,t}).
\end{eqnarray*}
The proof is complete.
\end{proof}

\noindent We are now ready to prove that the law of $X_t^{\pi}$ is absolutely continuous with respect to the Lebesgue measure. Our strategy is based on the following criteria that generalizes to some extent Proposition 2.1.1 in \cite{Nualart}:\\

\noindent \emph{Suppose that $X\in \mathbb{D}^{1,2}$ and let $h\in H$ be such that $D_hX\neq 0$ almost surely and $h/D_hX$ belongs to the domain of $\delta$. Then, $X$ possesses a bounded and continuous density.}\\

\noindent First of all we note that the assumption 
\begin{eqnarray*}
E[|D_{h^{\pi}}Y|^{-q}]\mbox{ is finite for some $q>4$}
\end{eqnarray*}
from Theorem \ref{main theorem} implies that $D_{h^{\pi}}Y\neq 0$ almost surely (and also that $\mathcal{T}_{-K^{\pi}_{s,t}}D_{h^{\pi}}Y\neq 0$ since Cameron-Martin shifts preserve negligible sets). This fact combined with Proposition \ref{direction} entails that $D_{h^{\pi}}X_t^{\pi}\neq 0$ almost surely. Hence, we are left with the verification that $h^{\pi}/D_{h^{\pi}}X_t^{\pi}$ belongs to the domain of $\delta$; a sufficient condition for that is $h^{\pi}/D_{h^{\pi}}X_t^{\pi}\in\mathbb{D}^{1,2}(H)$.  \\
We observe that
\begin{eqnarray*}
E[\vert h^{\pi}/D_{h^{\pi}}X_t^{\pi}\vert^2_H]&=&\vert h^{\pi}\vert^2_HE[1/|D_{h^{\pi}}X_t^{\pi}|^2]
\end{eqnarray*}
and
\begin{eqnarray*}
E[\vert D(h^{\pi}/D_{h^{\pi}}X_t^{\pi})\vert^2_{H\otimes H}]&=&\vert h^{\pi}\vert^2_HE[\vert DD_{h^{\pi}}X_t^{\pi}\vert^2_H/|D_{h^{\pi}}X_t^{\pi}|^4]\\
&\leq&\vert h^{\pi}\vert^2_HE[\vert DD_{h^{\pi}}X_t^{\pi}\vert^{2p}_H]^{1/p}\cdot E[1/|D_{h^{\pi}}X_t^{\pi}|^{4q}]^{1/q}
\end{eqnarray*}
where we applied the H\"older inequality with $1/p+1/q=1$. Therefore, $h^{\pi}/D_{h^{\pi}}X_t^{\pi}\in\mathbb{D}^{1,2}(H)$ if $D_{h^{\pi}}X_t^{\pi}\in\mathbb{D}^{1,p}$ for all $p\geq 1$ and 
\begin{eqnarray}\label{inverse integrability}
E[1/|D_{h^{\pi}}X_t^{\pi}|^{q}]\mbox{ is finite for some $q>4$}.
\end{eqnarray}
The first condition follows from the fact that $X_t^{\pi}\in\mathbb{D}^{2,p}$ (see Theorem \ref{theorem on DX}). We now verify the validity of (\ref{inverse integrability}). Let $q>4$ be such that $E[|D_{h^{\pi}}Y|^{-q}]$ is finite and let $\alpha>1$ and $\varepsilon>0$ be such that $\tilde{q}:=(q-\varepsilon)/\alpha>4$; then, recalling the identity (\ref{D_hX}) we can write
\begin{eqnarray*}
|D_{h^{\pi}}X_t^{\pi}|^{-\tilde{q}}&=&|\mathcal{T}_{-K_{s,t}^{\pi}}D_{h^{\pi}}Y|^{-\tilde{q}}\cdot\exp\left\{-\tilde{q}\int_s^tb_x(r,\mathcal{T}_{-K_{s,t}^{\pi}}\mathcal{T}_{K_{s,r}^{\pi}}X_r^{\pi})dr\right\}\cdot|\mathcal{E}(K_{s,t}^{\pi})|^{-\tilde{q}}\\
&\leq&e^{\tilde{q}L(t-s)}|\mathcal{T}_{-K_{s,t}^{\pi}}D_{h^{\pi}}Y|^{-\tilde{q}}\cdot|\mathcal{E}(K_{s,t}^{\pi})|^{-\tilde{q}}\\
&=&e^{\tilde{q}L(t-s)}\mathcal{T}_{-K_{s,t}^{\pi}}|D_{h^{\pi}}Y|^{-\tilde{q}}\cdot|\mathcal{E}(K_{s,t}^{\pi})|^{-\tilde{q}}.
\end{eqnarray*}
Therefore,
\begin{eqnarray*}
E[|D_{h^{\pi}}X_t^{\pi}|^{-\tilde{q}}]&\leq& e^{\tilde{q}L(t-s)}E[\mathcal{T}_{-K_{s,t}^{\pi}}|D_{h^{\pi}}Y|^{-\tilde{q}}\cdot|\mathcal{E}(K_{s,t}^{\pi})|^{-\tilde{q}}]\\
&\leq& e^{\tilde{q}L(t-s)}E[\mathcal{T}_{-K_{s,t}^{\pi}}|D_{h^{\pi}}Y|^{-q+\varepsilon}]^{1/\alpha}\cdot E[|\mathcal{E}(K_{s,t}^{\pi})|^{-\tilde{q}\beta}]^{1/\beta}\\
&\leq&CE[|D_{h^{\pi}}Y|^{-q}]^{(q-\varepsilon)/q\alpha}
\end{eqnarray*}
where $1/\alpha+1/\beta=1$ and $C$ is a positive constant depending on $L$, $\alpha$, $\varepsilon$, $q$ and $T$. In the last inequality we utilized the fact that $\mathcal{T}_{-K_{s,t}^{\pi}}$ maps $L^q(\Omega)$ into $L^r(\Omega)$ for all $r<p$ and the membership of $\mathcal{E}(K_{s,t}^{\pi})$ to all the spaces $L^p(\Omega)$ for $p\geq 1$. The assumption on $Y$ completes the proof of the claim (\ref{inverse integrability}) which in turn implies the absolute continuity of the law of $X_t^{\pi}$ with respect to the one dimensional Lebesgue measure with a bounded and continuous density.

\subsection{Fokker-Planck equation}\label{subsection 2}

\noindent We now prove the second part of Theorem \ref{main theorem}. We will assume as before that the function $\sigma$ in the diffusion coefficient of equation (\ref{SDE}) is identically equal to one.  \\
As for exact solutions, the key ingredient to relate the law of the solution of the stochastic equation (\ref{SDE}) with a Fokker-Planck type equation is the It\^o formula.
\begin{theorem}[It\^o formula]\label{Ito formula}
Let $\{X_t^{\pi}\}_{t\in [s,T]}$ be the unique solution of equation (\ref{SDE}) and let $\varphi\in C^{1,2}([0,T]\times\mathbb{R})$ be such that $\partial_x\varphi$ and $\partial_{xx}\varphi$ have at most polynomial growth at infinity. Then, for $s\leq t\leq T$ we have 
\begin{eqnarray*}
\varphi(t,X_t^{\pi})-\varphi(s,Y)&=&\int_s^t\left[\partial_t\varphi(r,X_r^{\pi})+\partial_x\varphi(r,X_r^{\pi})b(r,X_r^{\pi})\right]dr\\
&&+\int_s^t\partial_{xx}\varphi(r,X_r^{\pi})X_r^{\pi}D_{\partial_rK_r^{\pi}}X_r^{\pi}dr\\
&&+\int_s^t\left(\partial_{x}\varphi(r,X_r^{\pi})X_r^{\pi}\right)\diamond\dot{B}_r^{\pi}dr.
\end{eqnarray*}
\end{theorem}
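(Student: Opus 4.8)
The plan is to exploit the fact that, once $\sigma\equiv 1$, equation (\ref{SDE}) is a genuine (anticipating) random ODE, so that for each fixed $\omega$ the trajectory $r\mapsto X_r^{\pi}(\omega)$ is absolutely continuous and the classical chain rule applies pathwise; the second-order term in the statement will then emerge purely from rewriting ordinary products as Wick products, with no It\^o-type correction. Concretely, I would start from the integral form $X_t^{\pi}=Y+\int_s^tb(r,X_r^{\pi})\,dr+\int_s^tX_r^{\pi}\diamond\dot{B}_r^{\pi}\,dr$ and expand the Wick product by means of the identity $F\diamond\int_0^Th(u)\,dB_u=F\cdot\int_0^Th(u)\,dB_u-D_hF$ (see (\ref{definition Wick})) with $h=\partial_rK_r^{\pi}(\cdot)$, obtaining
\[
X_r^{\pi}\diamond\dot{B}_r^{\pi}=X_r^{\pi}\cdot\dot{B}_r^{\pi}-D_{\partial_rK_r^{\pi}}X_r^{\pi}.
\]
Since $\dot{B}_r^{\pi}$ is piecewise constant in $r$ (see (\ref{def K dot})) while $X_r^{\pi}$ and $D_{\partial_rK_r^{\pi}}X_r^{\pi}$ are continuous in $r$ on each subinterval of $\pi$, the right-hand side is a bona fide random field, piecewise continuous in $r$, so $r\mapsto X_r^{\pi}(\omega)$ is absolutely continuous with $\dot{X}_r^{\pi}=b(r,X_r^{\pi})+X_r^{\pi}\dot{B}_r^{\pi}-D_{\partial_rK_r^{\pi}}X_r^{\pi}$ for a.e. $r$ and a.s.

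Next, for fixed $\omega$ I apply the deterministic chain rule to $r\mapsto\varphi(r,X_r^{\pi})$ and integrate by the fundamental theorem of calculus, which yields
\[
\varphi(t,X_t^{\pi})-\varphi(s,Y)=\int_s^t\big[\partial_t\varphi+\partial_x\varphi\cdot b\big]\,dr+\int_s^t\partial_x\varphi\cdot X_r^{\pi}\dot{B}_r^{\pi}\,dr-\int_s^t\partial_x\varphi\cdot D_{\partial_rK_r^{\pi}}X_r^{\pi}\,dr,
\]
where every $\varphi$-term is understood as evaluated at $(r,X_r^{\pi})$.

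To match the claimed form I convert the ordinary product $\partial_x\varphi\cdot X_r^{\pi}\dot{B}_r^{\pi}$ back into a Wick product. Applying the same divergence identity to $F=\partial_x\varphi(r,X_r^{\pi})X_r^{\pi}$ together with the chain rule for $D$, namely $D_{\partial_rK_r^{\pi}}\big(\partial_x\varphi(r,X_r^{\pi})X_r^{\pi}\big)=\partial_{xx}\varphi\cdot X_r^{\pi}D_{\partial_rK_r^{\pi}}X_r^{\pi}+\partial_x\varphi\cdot D_{\partial_rK_r^{\pi}}X_r^{\pi}$, gives
\[
\big(\partial_x\varphi X_r^{\pi}\big)\diamond\dot{B}_r^{\pi}=\partial_x\varphi X_r^{\pi}\dot{B}_r^{\pi}-\partial_{xx}\varphi\, X_r^{\pi}D_{\partial_rK_r^{\pi}}X_r^{\pi}-\partial_x\varphi\, D_{\partial_rK_r^{\pi}}X_r^{\pi}.
\]
Substituting into the previous display, the two copies of $\partial_x\varphi\, D_{\partial_rK_r^{\pi}}X_r^{\pi}$ cancel and the term $\partial_{xx}\varphi\, X_r^{\pi}D_{\partial_rK_r^{\pi}}X_r^{\pi}$ survives, producing exactly the three integrals in the statement. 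This cancellation is the conceptual crux: although $\sigma\equiv1$ removes any quadratic-variation correction, the passage from the pointwise product $X_r^{\pi}\dot{B}_r^{\pi}$ to the Wick product $(\partial_x\varphi X_r^{\pi})\diamond\dot{B}_r^{\pi}$ manufactures the genuine second-order term.

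The main obstacle I expect is the integrability bookkeeping needed to make each step legitimate. To even write $(\partial_x\varphi X_r^{\pi})\diamond\dot{B}_r^{\pi}=\delta(\partial_x\varphi(r,X_r^{\pi})X_r^{\pi}\,\partial_rK_r^{\pi}(\cdot))$ I must verify that $\partial_x\varphi(r,X_r^{\pi})X_r^{\pi}\in\mathbb{D}^{1,2}$; this follows from $X_r^{\pi}\in\mathbb{D}^{2,p}$ for all $p\ge1$ (Theorem \ref{theorem on DX}), the chain rule for the Malliavin derivative, and the polynomial growth of $\partial_x\varphi$ and $\partial_{xx}\varphi$ combined with the $L^p(\Omega)$ moment bounds on $X_r^{\pi}$ and $DX_r^{\pi}$. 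The same moment control guarantees that all the integrands above are integrable in $(r,\omega)$, so that the pathwise chain rule, the application of the fundamental theorem of calculus, and the algebraic rearrangement are all justified. Once these memberships and finiteness of moments are in place, the computation outlined above delivers the identity.
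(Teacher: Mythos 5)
Your proposal is correct and follows essentially the same route as the paper: a pathwise chain rule applied to the random ODE, followed by the commutation identity between multiplication and the divergence operator to convert $\partial_x\varphi(r,X_r^{\pi})\cdot(X_r^{\pi}\diamond\dot{B}_r^{\pi})$ into $(\partial_x\varphi(r,X_r^{\pi})X_r^{\pi})\diamond\dot{B}_r^{\pi}+\partial_{xx}\varphi(r,X_r^{\pi})X_r^{\pi}D_{\partial_rK_r^{\pi}}X_r^{\pi}$. Your detour through the ordinary product $X_r^{\pi}\dot{B}_r^{\pi}$ and the cancellation of the two $\partial_x\varphi\,D_{\partial_rK_r^{\pi}}X_r^{\pi}$ terms is just the identity $\delta(Fu)=F\delta(u)-\langle DF,u\rangle_H$ (the paper's Proposition 1.3.3 of Nualart) applied twice instead of once, so the two arguments coincide.
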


\begin{proof}
First of all we observe that the condition on the growth at infinity of $\partial_x\varphi$ and $\partial_{xx}\varphi$ together with the membership of $X_r^{\pi}$ to $\mathbb{D}^{2,p}$ for all $p\geq 1$ ensure that
\begin{eqnarray*}
\left(\partial_{x}\varphi(r,X_r^{\pi})X_r^{\pi}\right)\diamond\dot{B}_r^{\pi}=\delta(\partial_{x}\varphi(r,X_r^{\pi})X_r^{\pi}\partial_rK_r^{\pi})
\end{eqnarray*}
is well defined since $\partial_{x}\varphi(r,X_r^{\pi})X_r^{\pi}\in\mathbb{D}^{1,2}$. Now, using equation (\ref{SDE}) we get
\begin{eqnarray}\label{last}
\varphi(t,X_t^{\pi})-\varphi(s,Y)&=&\int_s^t\left[\partial_t\varphi(r,X_r^{\pi})+\partial_x\varphi(r,X_r^{\pi})\dot{X}_r^{\pi}\right]dr\nonumber\\
&=&\int_s^t\left[\partial_t\varphi(r,X_r^{\pi})+\partial_x\varphi(r,X_r^{\pi})b(r,X_r^{\pi})\right]dr\nonumber\\
&&+\int_s^t\partial_x\varphi(r,X_r^{\pi})\cdot(X_r^{\pi}\diamond \dot{B}_r^{\pi})dr.
\end{eqnarray}
Moreover, according to Proposition 1.3.3 in \cite{Nualart} we can write
\begin{eqnarray*}
\partial_x\varphi(r,X_r^{\pi})\cdot(X_r^{\pi}\diamond \dot{B}_r^{\pi})&=&\partial_x\varphi(r,X_r^{\pi})\cdot\delta(X_r^{\pi}\partial_rK_r^{\pi})\\
&=&\delta(\partial_x\varphi(r,X_r^{\pi})X_r^{\pi}\partial_rK_r^{\pi})\\
&&+\int_0^T(D_u\partial_x\varphi(r,X_r^{\pi}))X_r^{\pi}\partial_rK_r^{\pi}(u)du\\
&=&(\partial_x\varphi(r,X_r^{\pi})X_r^{\pi})\diamond\dot{B}_r^{\pi}\\
&&+\int_0^T\partial_{xx}\varphi(r,X_r^{\pi})(D_uX_r^{\pi})X_r^{\pi}\partial_rK_r^{\pi}(u)du\\
&=&(\partial_x\varphi(r,X_r^{\pi})X_r^{\pi})\diamond\dot{B}_r^{\pi}\\
&&+\partial_{xx}\varphi(r,X_r^{\pi})X_r^{\pi}D_{\partial_rK_r^{\pi}}X_r^{\pi}.
\end{eqnarray*}
The substitution of the integrand in (\ref{last}) with the last member of the previous chain of equalities provides the desired formula.  
\end{proof}
\noindent Now, let $\varphi\in C_0^{\infty}([s,T]\times\mathbb{R})$; then, by Theorem \ref{Ito formula} we obtain
\begin{eqnarray*}
0&=&\varphi(T,X_T^{\pi})-\varphi(s,Y)\\
&=&\int_s^T\left[\partial_t\varphi(r,X_r^{\pi})+\partial_x\varphi(r,X_r^{\pi})b(r,X_r^{\pi})\right]dr\\
&&+\int_s^T\partial_{xx}\varphi(r,X_r^{\pi})X_r^{\pi}D_{\partial_rK_r^{\pi}}X_r^{\pi}dr\\
&&+\int_s^T\left(\partial_{x}\varphi(r,X_r^{\pi})X_r^{\pi}\right)\diamond\dot{B}_r^{\pi}dr.
\end{eqnarray*}
Taking the expectation, recalling that for all $X\in\mathbb{D}^{1,2}$
\begin{eqnarray*}
E[X\diamond\dot{B}_r^{\pi}]=E[\delta(X\partial_rK_r^{\pi})]=0
\end{eqnarray*} 
and denoting by $p^{\pi}(r,x)$ the density of the random variable $X_r^{\pi}$, we get
\begin{eqnarray*}
0&=&E\left[\int_s^T\partial_t\varphi(r,X_r^{\pi})+\partial_x\varphi(r,X_r^{\pi})b(r,X_r^{\pi})dr\right]\\
&&+E\left[\int_s^T\partial_{xx}\varphi(r,X_r^{\pi})X_r^{\pi}D_{\partial_rK_r^{\pi}}X_r^{\pi}dr\right]\\
&=&\int_s^TE\left[\partial_t\varphi(r,X_r^{\pi})+\partial_x\varphi(r,X_r^{\pi})b(r,X_r^{\pi})\right]dr\\
&&+\int_s^TE\left[\partial_{xx}\varphi(r,X_r^{\pi})X_r^{\pi}D_{\partial_rK_r^{\pi}}X_r^{\pi}\right]dr\\
&=&\int_s^T\int_{\mathbb{R}}(\partial_t\varphi(r,x)+\partial_x\varphi(r,x)b(r,x))p^{\pi}(r,x)dxdr\\
&&+\int_s^TE\left[\partial_{xx}\varphi(r,X_r^{\pi})X_r^{\pi}E[D_{\partial_rK_r^{\pi}}X_r^{\pi}|\sigma(X_r^{\pi})]\right]dr\\
&=&\int_s^T\int_{\mathbb{R}}(\partial_t\varphi(r,x)+\partial_x\varphi(r,x)b(r,x))p^{\pi}(r,x)dxdr\\
&&+\int_s^T\int_{\mathbb{R}}\partial_{xx}\varphi(r,x)xg(r,x)p^{\pi}(r,x)dxdr
\end{eqnarray*}
where $g:[s,T]\times\mathbb{R}\to\mathbb{R}$ is a measurable function such that
\begin{eqnarray}\label{definition of g}
g(r,x)=E[D_{\partial_rK_r^{\pi}}X_r^{\pi}|\sigma(X_r^{\pi})]|_{X_r^{\pi}=x}.
\end{eqnarray}
We observe that $g$ is uniquely defined up to sets that are negligible with respect to the law of $X_r^{\pi}$ and hence negligible with respect to the Lesbegue measure. According to this construction the function $g$ is barely measurable with only some integrability properties against the density $p^{\pi}(t,x)$. In fact, since $D_{\partial_rK_r^{\pi}}X_r^{\pi}\in L^{q}(\Omega)$ for all $q\geq 1$ we get the bound
\begin{eqnarray*}
\int_{\mathbb{R}}|g(r,x)|^qp^{\pi}(r,x)dx&=&E\left[|E[D_{\partial_rK_r^{\pi}}X_r^{\pi}|\sigma(X_r^{\pi})]|^q\right]\\
&\leq&E\left[|D_{\partial_rK_r^{\pi}}X_r^{\pi}|^q\right].
\end{eqnarray*}
We have therefore proved for any $\varphi\in C_0^{\infty}([s,T]\times\mathbb{R})$ the identity
\begin{eqnarray*}
\int_s^T\int_{\mathbb{R}}(\partial_t\varphi(r,x)+\partial_x\varphi(r,x)b(r,x)+\partial_{xx}\varphi(r,x)xg(r,x))p^{\pi}(r,x)dxdr=0
\end{eqnarray*}
which is equivalent to the statement that $p^{\pi}(t,x)$ is a distributional solution of the equation
\begin{eqnarray*}
(\partial_t+b(t,x)\partial_x+xg(t,x)\partial_{xx})^{\ast}u(t,x)=0.
\end{eqnarray*}

\begin{example}
Consider the case where $b(t,x)=0$ and $\sigma(t)=1$. Then, equation (\ref{SDE}) reads 
\begin{eqnarray}\label{example SDE}
\left\{ \begin{array}{ll}
\dot{X}_t^{\pi}=X_t^{\pi}\diamond \dot{B}_t^{\pi},\quad t\in]s,T] \\
X_s^{\pi}= Y
\end{array}\right.
\end{eqnarray}
The solution to this equation is given by the formula
\begin{eqnarray}\label{last 2}
X_t^{\pi}=Y\diamond\mathcal{E}(K^{\pi}_{s,t}).
\end{eqnarray}
If we take $Y=X_s$ where $\{X_t\}_{t\in [0,T]}$ is the solution of
\begin{eqnarray}
\left\{ \begin{array}{ll}\label{exact SDE example}
dX_t=X_tdB_t,\quad t\in]0,T] \\
X_0= x\in\mathbb{R}
\end{array}\right.
\end{eqnarray}
(i.e. the SDE we are approximating with (\ref{example SDE})) then $X_s=x\cdot\mathcal{E}(1_{[0.s[})$ and by the properties of the Wick product we can write (\ref{last 2}) as
\begin{eqnarray*}
X_t^{\pi}&=&X_s\diamond\mathcal{E}(K^{\pi}_{s,t})\\
&=&x\cdot\mathcal{E}(1_{[0.s[})\diamond\mathcal{E}(K^{\pi}_{s,t})\\
&=&x\cdot\mathcal{E}(1_{[0.s[}+K^{\pi}_{s,t}).
\end{eqnarray*}
We now aim at finding an explicit expression for $g$ from formula (\ref{definition of g}) and hence for the Fokker-Planck-type equation associated to (\ref{example SDE}). First of all we compute the Malliavin derivative of $X_t^{\pi}$:
\begin{eqnarray*}
D_uX_t^{\pi}&=&D_u(x\cdot\mathcal{E}(1_{[0.s[}+K^{\pi}_{s,t}))\\
&=&X_t^{\pi}\cdot(1_{[0.s[}(u)+K^{\pi}_{s,t}(u)).
\end{eqnarray*}
Then, we get
\begin{eqnarray*}
g(t,x)&=&E[D_{\partial_tK_t^{\pi}}X_t^{\pi}|\sigma(X_t^{\pi})]|_{X_t^{\pi}=x}\\
&=&E\left[X_t^{\pi}\int_0^T(1_{[0.s[}(u)+K^{\pi}_{s,t}(u))\partial_tK_t^{\pi}(u)du\Big|\sigma(X_t^{\pi})\right]\Big|_{X_t^{\pi}=x}\\
&=&x\int_0^T(1_{[0.s[}(u)+K^{\pi}_{s,t}(u))\partial_tK_t^{\pi}(u)du.
\end{eqnarray*} 
Denoting
\begin{eqnarray*}
\xi^{\pi}(t):=\int_0^T(1_{[0.s[}(u)+K^{\pi}_{s,t}(u))\partial_tK_t^{\pi}(u)du
\end{eqnarray*}
we can write the Fokker-Planck-type equation for (\ref{example SDE}) as
\begin{eqnarray*}
(\partial_t+x^2\xi^{\pi}(t)\partial_{xx})^{\ast}p^{\pi}(t,x)=0
\end{eqnarray*} 
to be compared with 
\begin{eqnarray*}
(\partial_t+(x^2/2)\partial_{xx})^{\ast}p(t,x)=0
\end{eqnarray*} 
which is the one for the exact equation (\ref{exact SDE example}).
\end{example}

\end{document}